\numberwithin{equation}{section}
\newtheorem{theorem}{Theorem}[section]
\newtheorem{proposition}[theorem]{Proposition}
\theoremstyle{definition}
\newtheorem{remark}[theorem]{Remark}
\newtheorem{remarks}[theorem]{Remarks}
\newcommand\mytitle{On Hausdorff measure and an 
inequality due to Maz'ya} 
\newcommand\lhead{H. Vogt, J. Voigt}
\newcommand\rhead{Hausdorff measure and Maz'ya's inequality}
 \mathchardef\ordinarycolon\mathcode`\:
\newcommand\rlim{
\mathchoice{\vcenter{\hbox{${\scriptstyle{+}}$}}}
{\vcenter{\hbox{$\scriptstyle{+}$}}}
{\vcenter{\hbox{$\scriptscriptstyle{+}$}}}
{\vcenter{\hbox{$\scriptscriptstyle{+}$}}}}
\newcommand*\indic{\mathbf{1}}
\newcommand\la{\lambda}
\newcommand*\embed{\hookrightarrow}
\newcommand*\BV{\mathit{BV}}
\newcommand{\spt}{\operatorname{spt}}
\newcommand{\sgn}{\operatorname{sgn}}
\renewcommand{\Re}{\operatorname{Re}}
\newcommand\dist{\operatorname {dist}}
\renewcommand\phi{\varphi}
\newcommand\eps{\varepsilon}
\renewcommand\epsilon{\varepsilon}
\newcommand{\R}{\mathbb{R}\nonscript\hskip.03em}
\newcommand{\N}{\mathbb{N}\nonscript\hskip.03em}
\newcommand{\K}{\mathbb{K}\nonscript\hskip.03em}
\newcommand\cA{\mathcal A}
\newcommand\cB{\mathcal B}
\newcommand\ndash{\rule[.58ex]{\widthof{--}}{0.065ex}}
\newcommand\loc{{\rm loc}}
\newcommand{\kringel}[1]{\accentset{\smash{\raisebox{-0.15ex}{
$\scriptstyle\circ$ } }}{#1}\rule{0pt}{2.2ex}} 
\let\qedhere@ams\qedhere
\def\qedhere{\@ifnextchar[{\@qedhere}{\qedhere@ams}}
\def\@qedhere[#1]{\tag*{\raisebox{-#1ex}{\qedhere@ams}}}
\def\env@cases{%
  \let\@ifnextchar\new@ifnextchar
  \left\lbrace
  \def\arraystretch{1.1}%
  \array{@{\,}l@{\quad}l@{}}%
}
\renewcommand\section{\@startsection {section}{1}{\z@}%
                                     {-3.25ex \@plus -1ex \@minus -.2ex}%
                                     {1.5ex \@plus.2ex}%
                                     {\normalfont\large\bfseries}}
\newcommand\restrict{\vphantom f\mskip1mu\vrule\mskip2mu}
\newcommand\set[2]{\bigl\{#1{;}\;#2\bigr\}}
\newcommand\ol{\overline}
\newcommand\hm{\mathcal H}
\newcommand\pd{\partial}
\newcommand\+{\mkern1.5mu}
\newcommand\vol{\operatorname{vol}}
\newcommand\diam{\operatorname{diam}}
\newcommand\rd{\operatorname{rd}}
\newcommand\cC{\mathcal C}
\newcommand\cU{\mkern1mu\mathcal U}
\newcommand\smallbds{\vskip-1\lastskip\vskip5pt plus3pt minus2pt\noindent}
\newcommand\rfrac[2]{\tfrac{#1}{\raisebox{0.1em}{$\scriptstyle#2$}}}
\newcommand\drfrac[2]{\frac{\raisebox{-0.1em}{$#1$}}{\raisebox{0.1em}{$#2$}}}
\newcommand\comp{{\textnormal c}}
\newcommand\Cci{{\displaystyle 
C_{\raise0.2ex\hbox{$\scriptstyle\comp$}}^\infty}}
\renewcommand\le{\leqslant}
\renewcommand\ge{\geqslant}
\newcommand\sse{\subseteq}
\newcommand\di{\mathclose{}\,\mathrm{d}}
\newcommand\abstracttext{\noindent
We give an ``elementary'' proof of an 
inequality due to Maz'ya. 
As a prerequisite we prove an approximation property for the Hausdorff measure. 
We also comment on the relations between Maz'ya's inequality, the isoperimetric 
inequality and the Sobolev inequality.
\vspace{8pt}

\noindent
MSC 2010: 49Q15, 28A75, 46E35
\vspace{2pt}

\noindent
Keywords: Hausdorff measure, isoperimetric inequality, Sobolev 
inequality, functions of bounded variation, perimeter of subsets of $\R^n$
}
\begin{document}
\title{\mytitle}

\author{Hendrik Vogt and J\"urgen Voigt}

\date{}

\maketitle

\begin{abstract}
\abstracttext
\end{abstract}

\section*{Introduction}
\label{intro}

The objective of this note is to present an ``elementary'' proof of the 
following inequality, due to Maz'ya (see \cite[Theorem~4.6.3]{Mazya11}).
If $\Omega\sse\R^n$ is a bounded open 
set, 
$\hm_{n-1}(\pd\Omega)<\infty$, $q=\frac n{n-1}$, and $u\in C(\ol{\Omega})\cap 
W^1_1(\Omega)$, then
\begin{equation}\label{eq-mazya}
\|u\|_{L_q(\Omega)}\le 
c(n)\Bigl(\int_\Omega|{\nabla u(x)}|\di x + 
\int_{\pd\Omega}|{u(x)}|\di\hm_{n-1}(x)\Bigr),
\end{equation}
where $c(n)$ is a constant only depending on the dimension $n$. For the 
Hausdorff measure $\hm_{n-1}$ we refer to Section~\ref{sec-hm}.

In Remark~\ref{rem-rel-inequ}(b) we will comment on the relations between 
\eqref{eq-mazya}, 
the isoperimetric inequality
\begin{equation}\label{eq-isoper}
\vol_n(\Omega)^{\frac{n-1}n}\le c(n)\hm_{n-1}(\pd \Omega),
\end{equation}
valid for any bounded Borel set $\Omega\sse\R^n$, and the Sobolev inequality
\begin{equation}\label{eq-sobolev}
\|u\|_{L_q(\R^n)}\le c(n)\int_{\R^n}|\nabla u(x)|\di x,
\end{equation}
for all $u\in W_1^1(\R^n)$. All three inequalities hold in fact  
with the same optimal constant
\[
c(n)=\frac1{n\omega_n^{1/n}}=\frac{\Gamma(n/2+1)^{1/n}}{n\sqrt\pi}\+,
\]
the isoperimetric constant.
In our proof we will suppose the validity of 
\eqref{eq-sobolev}, with some (non-optimal) constant $c(n)$; we refer to 
\cite[Remark~5.11]{Adams75}, \cite[Th\'eor\`eme~IX.9]{Brezis83} for a proof. 
We will will then derive \eqref{eq-mazya} with a constant that is
larger than the constant in~\eqref{eq-sobolev}.

Rondi \cite[Theorem~2.2]{Rondi17} has shown an inequality more general than 
\eqref{eq-mazya}, where in particular the first term on the right-hand side
is replaced by the variation of~$u$ on~$\Omega$.

Inequalities \eqref{eq-isoper} and \eqref{eq-mazya} are notorious for their 
challenging technical level. The desire to find an accessible proof of 
\eqref{eq-mazya} arose from an application concerning the Dirichlet-to-Neumann 
operator for ``rough domains''; see~\cite{ArendtElst11}. The 
inequality needed in this application is
\begin{equation}\label{eq-mazya-2}
\|u\|_{L_2(\Omega)}^2\le
c(\Omega)\Bigl(\int_\Omega|{\nabla u(x)}|^2\di x + 
\int_{\pd\Omega}|{u(x)}|^2\di\hm_{n-1}(x)\Bigr),
\end{equation}
for all $u\in C(\ol{\Omega})\cap W_2^1(\Omega)$, with a constant $c(\Omega)$ 
only 
depending on the domain, but where the optimality of the constant is 
inessential. In Remark~\ref{rem-rel-inequ}(a) we will indicate how this 
inequality follows from 
\eqref{eq-mazya}. An earlier application of inequality 
\eqref{eq-mazya-2} can be found in~\cite{Daners00}. 

In Section~\ref{sec-hm} we treat an approximation property in the context of 
Hausdorff measures that will be used in the proof of~\eqref{eq-mazya}
in Section~\ref{sec-Maz-ineq}. 
In Section~\ref{sec-bdd-var} we recall the notion of functions of bounded 
variation, and we show that our method also yields an estimate of the total 
variation of $u$ by the right-hand side of~\eqref{eq-mazya}.

\section{On the Hausdorff measure $\hm_d$}
\label{sec-hm}

In this section we present an auxiliary result concerning the 
approximation of the Hausdorff measure of dimension 
$d\in[0,\infty)$ on a metric space $(M,\rho)$. 
We start by introducing some notation and recalling Hausdorff 
measures.

For a set $C\sse M$ we define $\rd(C):=\frac12\diam(C)$, 
and for 
a countable 
collection $\cC$ of subsets of $M$ we define $\rd(\cC):=\sup_{C\in\+\cC}\rd(C)$ 
and 
\[
S_d(\cC):=\omega_d\sum_{C\in\+\cC}\rd(C)^d,
\]
\smallbds
where
\[
 \omega_d
:=\frac{\pi^{d/2}}{\Gamma(\frac{d}{2}+1)},
\]
which is the volume of the unit ball in $\R^d$ if $d\in\N_0$. 
(Even though the notation `$\rd$' should be remindful of `radius', the reader 
should be aware that a set $C$ will not necessarily be contained in a ball with 
radius $\rd(C)$.)

Let $B\sse M$. For $\delta>0$ we put
\[
 \hm_{d,\delta}(B)
:=\inf\set{S_d(\cC)}{ \cC\text{ countable covering of }B,\ 
\rd(\cC)\le\delta}.
\]
\smallbds
Then
\[
 \hm_d^*(B)
:=\lim_{\delta\to 0}\hm_{d,\delta}(B)
=\sup_{\delta> 0}\hm_{d,\delta}(B)\]
is the
{outer $d$-dimensional Hausdorff measure} of $B$.
Carath\'eodory's
construction of measurable sets yields a measure $\hm_d$, the
{$d$-dimensional Hausdorff measure}, and it turns out
that all Borel sets are measurable. If $d\in\N$, and 
$E=\R^d\times\{0\}\sse\R^n$,
then $\hm_d$ is the Lebesgue measure on $\R^d\cong E$. For all of these
properties we refer to \cite[Chap.\,2]{EvansGariepy92} and 
\cite[\protect{2.10.2}]{Federer69}.

Observe that, in the definition of $\hm_{d,\delta}(B)$,
one can also take the infimum over all countable \emph{open} coverings of~$B$
and still obtain the same resulting value for $\hm_d^*(B)$.
Indeed, for $\eps>0$ there exists a 
countable covering $\cC$ with $\rd(\cC)\le\delta/2$ and 
$S_d(\cC)\le\hm_d^*(B)+\eps/2$. Choose $(\eps_C)_{C\in\+\cC}\in(0,\infty)^\cC$ 
such that $\sum_{C\in\+\cC}\eps_C\le\eps/2$. Then for all $C\in\cC$ there 
exists an open set $U_C\supseteq C$ such that
\[
\rd(U_C)\le\delta\quad\text{and}\quad 
\omega_d\rd(U_C)^d\le\omega_d\rd(C)^d+\eps_C.
\]
Then $\cU:=\set{U_C}{C\in\cC}$ is a countable open covering of $B$ 
with 
$\rd(\cU)\le\delta$ and $S_d(\cU)\le\hm_d^*(B)+\eps$.

\sloppy
\begin{proposition}\label{prop-prop-HM}
Let $M$ be a metric space, $d\in[0,\infty)$, and assume that 
$\hm_d(M)<\nobreak\infty$. Let $\eps>0$. 

Then for all $\delta>0$ there exists a countable partition 
$\cA$ of $M$ with $\rd(\cA)\le\delta$, consisting 
of Borel subsets of $M$ and such that
\begin{equation}\label{eq-hm-estimate}
\sum_{C\in\+\cA}\bigl|\hm_d(C)-\omega_d \rd(C)^d\bigr|\le \eps.
\end{equation}
If $M$ is compact, then the partition $\cA$ can be chosen finite.
\end{proposition}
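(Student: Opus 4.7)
The plan is to disjointify a near-optimal cover of $M$ at a sufficiently fine scale $\delta_0\le\delta$, and to split each absolute value $|\hm_d(A)-\omega_d\rd(A)^d|$ into two nonnegative pieces by interposing the auxiliary quantity $\hm_{d,\delta_0}(A)$. The key observation to be exploited is that $\hm_{d,\delta_0}(A)$ is simultaneously dominated by $\hm_d(A)$ (since $\hm_d$ is the supremum of the $\hm_{d,\delta'}$) and by $\omega_d\rd(A)^d$ (via the trivial one-set cover, provided $\rd(A)\le\delta_0$); so it serves as a common lower bound and each absolute value becomes a sum of two positive differences.

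Given $\delta,\eps>0$, I would first choose $\delta_0\in(0,\delta]$ with $\hm_d(M)-\hm_{d,\delta_0}(M)\le\eps/2$, which is possible since $\hm_{d,\delta'}(M)\uparrow\hm_d(M)$ as $\delta'\downarrow 0$. Then I would pick a countable cover $\cC=\{C_i\}$ of $M$ with $\rd(\cC)\le\delta_0$ and $S_d(\cC)\le\hm_{d,\delta_0}(M)+\eps/2$, and disjointify via $A_i:=C_i\setminus\bigcup_{j<i}C_j$. The family $\cA:=\{A_i\}$ is a countable Borel partition of $M$ with $\rd(A_i)\le\rd(C_i)\le\delta_0\le\delta$, and it is my candidate partition.

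For the estimate, the termwise bound
$$
|\hm_d(A_i)-\omega_d\rd(A_i)^d| \le \bigl(\hm_d(A_i)-\hm_{d,\delta_0}(A_i)\bigr) + \bigl(\omega_d\rd(A_i)^d-\hm_{d,\delta_0}(A_i)\bigr)
$$
holds because both parenthesized terms are nonnegative. Summing, the first sum telescopes to $\hm_d(M)-\sum_i\hm_{d,\delta_0}(A_i)$, which is at most $\hm_d(M)-\hm_{d,\delta_0}(M)\le\eps/2$ by countable subadditivity of the outer measure $\hm_{d,\delta_0}$ applied to the partition $\cA$; the second sum is at most $S_d(\cC)-\sum_i\hm_{d,\delta_0}(A_i)\le S_d(\cC)-\hm_{d,\delta_0}(M)\le\eps/2$. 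Adding the two estimates yields the desired bound $\eps$. For compact $M$, I would additionally use the observation preceding the proposition that $\cC$ may be taken to consist of open sets, extract a finite subcover before disjointifying, and thus obtain a finite partition. The only real obstacle is spotting the intermediate quantity $\hm_{d,\delta_0}$; once it is in play the required estimates are essentially forced by the definitions.
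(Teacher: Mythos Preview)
Your argument is correct and takes a genuinely different route from the paper's. The paper also disjointifies a near-optimal cover at a scale $\delta_0\le\delta$, but then proves a two-sided estimate $\hm_d(M_1)-\eps\le S_d(\cA_1)\le\hm_d(M_1)+2\eps$ valid for \emph{every} subcollection $\cA_1\sse\cA$ (by complementing $\cA_1$ with a near-optimal cover of $M\setminus\bigcup\cA_1$ and invoking the lower bound $S_d\ge\hm_d(M)-\eps$ for the combined cover); it then applies this to the particular split of $\cA$ according to the sign of $\hm_d(C)-\omega_d\rd(C)^d$, ending with the bound $3\eps$. Your device of interposing the pre-measure value $\hm_{d,\delta_0}(A_i)$ as a common lower bound for both $\hm_d(A_i)$ and $\omega_d\rd(A_i)^d$ bypasses this detour: countable subadditivity of $\hm_{d,\delta_0}$ together with the single near-optimality inequality $S_d(\cC)\le\hm_{d,\delta_0}(M)+\eps/2$ does all the work, and you even land on the sharper constant $\eps$. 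The paper's intermediate two-sided control on arbitrary subpartitions is a slightly stronger statement, but it is not needed for the proposition itself. One cosmetic point: the sets $C_i$ in an admissible cover for $\hm_{d,\delta_0}$ are a priori arbitrary, so to guarantee that the $A_i$ are Borel you should replace each $C_i$ by its closure (which has the same diameter) before disjointifying, and discard any empty $A_i$; both adjustments are harmless.
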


\fussy
\begin{proof}
(i) The definition of $\hm_d$ implies 
that there exists $\delta_\eps>0$ such that for all countable coverings 
$\cC$ of $M$ with $\rd(\cC)\le\delta_\eps$ one has
\begin{equation}\label{eq-hm-lower-est}
\hm_d(M)-\eps\le S_d(\cC).
\end{equation}

(ii) Next we show: for all $\delta>0$ there exists a countable partition 
$\cA$ of $M$ with $\rd(\cA)\le\delta$, consisting 
of Borel subsets of $M$ and such that
\[
S_d(\cA)\le\hm_d(M)+\eps;
\]
if $M$ is compact, then the partition $\cA$ can be chosen finite.

As pointed out above, there exists a countable open covering $\cU$ of $M$ with 
$\rd(\cU)\le\delta$ and $S_d(\cU)\le\hm_d(M)+\eps$. 
If $M$ is compact, then there exists a finite subcovering of $\cU$. A 
standard procedure to produce a disjoint covering yields the desired 
partition~$\cA$.

(iii) Let $0<\delta\le\delta_\eps$ (from step (i)), and let $\cA$ be a 
partition of $M$ as 
in~(ii). 
Let $\cA_1\sse\cA$; then $\cA_1$ is a partition of 
$M_1:=\bigcup\cA_1$. We show that
\begin{equation}\label{eq1--prop-HM}
\hm_d(M_1)-\eps\le S_d(\cA_1)\le\hm_d(M_1)+2\eps.
\end{equation}
Let $\cC_2$ be a countable covering of $M_2:=M\setminus M_1$  with 
$\rd(\cC_2)\le\delta_\eps$. 
Then the covering $\cA_1\cup\cC_2$ of $M$ satisfies
$\rd(\cA_1\cup\cC_2) \le \delta_\eps$, so from \eqref{eq-hm-lower-est} we obtain
\[
\hm_d(M_1)+\hm_d(M_2)-\eps=\hm_d(M)-\eps\le S_d(\cA_1\cup \cC_2)\le 
S_d(\cA_1)+S_d(\cC_2).
\]
Since one can approximate $\hm_d(M_2)$ arbitrarily well by 
$S_d(\cC_2)$, choosing $\cC_2$ suitably, this inequality implies
\[
\hm_d(M_1)-\eps\le S_d(\cA_1),
\]
the left-hand inequality of \eqref{eq1--prop-HM}.

The application of the result obtained so far to the partition 
$\cA_2:=\cA\setminus\cA_1$ of $M_2$ yields
$\hm_d(M_2)-\eps\le S_d(\cA_2)$.
Putting this inequality together with the inequality 
stated in step~(ii) we obtain
\begin{equation*}
S_d(\cA_1) = 
S_d(\cA)-S_d(\cA_2)
\le\hm_d(M)+\eps-(\hm_d(M_2)-\eps)=\hm_d(M_1)+2\eps,
\end{equation*}
the right-hand inequality of \eqref{eq1--prop-HM}.

Now we choose $\cA_1:=\set{C\in\cA}{\omega_d \rd(C)^d\le\hm_d(C)}$ and 
apply \eqref{eq1--prop-HM} with 
$\cA_1$, $M_1$ and with $\cA_2$, 
$M_2$ (as defined above):
\begin{align*}
\hspace{2em} & \hspace{-2em}
\sum_{C\in \cA}\bigl|\hm_d(C)-\omega_d \rd(C)^d\bigr|\\
&=\sum_{C\in\+\cA_1}\bigl(\hm_d(C)-\omega_d \rd(C)^d \bigr)+
\sum_{C\in\+\cA_2}\bigl(\omega_d 
\rd(C)^d-\hm_d(C)\bigr)\\
&=\bigl(\hm_d(M_1)-S_d(\cA_1)\bigr)
+\bigl(S_d(\cA_2)-\hm_d(M_2)\bigr)\le 3\eps.
\qedhere
\end{align*}
\end{proof}

\begin{remark}\label{rem-prop-HM}
The crucial point of the inequality in Proposition~\ref{prop-prop-HM}
is that not only is the sum $S_d(\cA)$ close to $\hm_d(M)$,
but the individual terms $\omega_d\rd(C)^d$ of the sum also approximate
the corresponding terms $\hm_d(C)$, with a small total error.
\end{remark}

\section{Proof of Maz'ya's inequality}
\label{sec-Maz-ineq}

Before entering the proof we recall some properties of distributional 
derivatives.

\begin{remarks}\label{rem-sobolev} Let $\Omega\sse\R^n$ be an open set.

(a) Let $u,v\in L_{1,\loc}(\Omega;\R)$, 
$\nabla 
u,\nabla v\in L_{1,\loc}(\Omega;\R^n)$.
Then $\nabla (u\land v)\in L_{1,\loc}(\Omega;\R^n)$,
\[
|\nabla (u\land v)|\le|\nabla u|+|\nabla v|.
\]
Indeed, from the well-known equality $\partial_ju^+=\indic_{[u>0]}\partial_ju$ 
one easily deduces
\[
\partial_j|u|=\partial_ju^+-\partial_ju^-=\bigl(\indic_{[u>0]}
-\indic_ {[ u\le0]}\bigr)\partial_ju,
\]
for $1\le j\le n$.
Applying this equality to $u\wedge v=\frac12(u+v-|u-v|)$ one obtains
\begin{equation*}
\nabla(u\wedge v) =\indic_{[u\le v]}\nabla u+\indic_{[u>v]}\nabla v,
\end{equation*}
and this equality 
implies the asserted inequality.

(b) Let $u\in L_{1,\loc}(\Omega)$, $1\le j\le n$, 
$\partial_j u\in L_{1,\loc}(\Omega)$. Then
\[
\partial_j|u| = \Re(\ol{\sgn u}\,\partial_j u)\in L_{1,\loc}(\Omega).
\]
Indeed, given $\delta>0$, the equality
\[
\partial_j(|u|^2+\delta)^{1/2}=\partial_j(\ol uu+\delta)^{1/2}
=(|u|^2+\delta)^{-1/2}\Re(\ol u\,\partial_j u)
\]
is straightforward if $u$ is continuously differentiable. Standard 
approximation arguments show that this equality also holds under the present 
hypotheses. 
Letting $\delta\to 0$ one concludes that
\[
\partial_j|u|=\Re\Bigl(\drfrac{\ol u}{| u|}\indic_{[u\ne0]}\,\partial_j 
u\Bigr)
=\Re(\ol{\sgn u}\,\partial_j u).
\]

(c) Let additionally $\Omega\sse\R^n$ be bounded, and let $u\in 
C_0(\Omega)\cap W^1_1(\Omega)$. We show that then $u\in W^1_{1,0}(\Omega)$.
It is sufficient to treat the case when $u$ is real-valued. Splitting 
$u=u^+-u^-$ we reduce the problem to the case $u\ge0$.

For $s>0$ one has $u-s\in W^1_1(\Omega)$, and similarly as in part (a) above 
one obtains 
$\nabla(u-s)^+=\indic_{[u>s]}\nabla(u-s)=\indic_{[u>s]}\nabla u$. 
Note that $u\le s$ on a neighbourhood of $\partial\Omega$
because $u\in C_0(\Omega)$, 
and therefore $\spt(u-s)^+$ is compact, which implies $(u-s)^+\in 
W^1_{1,\comp}(\Omega)$. For $s\to0$ one has $(u-s)^+\to u$, $\nabla(u-s)^+\to 
\indic_{[u>0]}\nabla u$ pointwise and dominated, therefore in $L_1(\Omega)$, 
$L_1(\Omega;\R^n)$, respectively. Because $\indic_{[u>0]}\nabla u=\nabla 
u^+=\nabla u$, this means that $(u-s)^+\to u$ in $W^1_1(\Omega)$, hence 
$u\in\nobreak W_{1,0}^1(\Omega)$.
\end{remarks}

As mentioned in the 
introduction, we will use Sobolev's inequality 
\eqref{eq-sobolev} (where $c(n)$ need not be the optimal constant) in our proof 
of \eqref{eq-mazya}.

\begin{proof}[Proof of Maz'ya's inequality \eqref{eq-mazya}]
Let $u \in C(\ol\Omega)\cap W^1_1(\Omega)$. It follows from 
Remark~\ref{rem-sobolev}(b)
that $|u|\in C(\ol\Omega)\cap W^1_1(\Omega)$ and that 
$\bigl|\nabla |u|\bigr| 
\le|\nabla u|$. This shows that it is sufficient to treat the case 
$u\ge0$.

Let $u\in C(\ol\Omega)\cap W^1_1(\Omega)$, 
$u\ge0$, and let $\eps>0$. Then, by the 
(uniform) continuity of $u$, there exists $\delta\in(0,\eps]$ such that 
$|u(x)-u(y)|<\eps$ whenever $x,y\in\ol\Omega$, $|x-y|<\delta$. 
By
Proposition~\ref{prop-prop-HM} there exists a finite partition $\cA$ of 
$\partial\Omega$ with $\rd(\cA)\le\delta/4$, consisting of Borel sets and 
such that \eqref{eq-hm-estimate} holds. We choose a family 
$(x_C)_{C\in\+\cA}$ with $x_C\in C$ for all $C\in\cA$. 
(Recall that, by 
definition, the sets in a partition are supposed to be 
non-empty.)

Clearly $\set{B_{\R^n}[x_C,\diam(C)]}{C\in\cA}$ is a covering 
of $\partial\Omega$, where we use the notation $B[x,r]$ for the closed ball 
with centre $x$ and radius $r$. For each $C\in\cA$, $s\in(0,\delta/2)$ we 
define a function
\[
\psi_{C,s}(x):=
\begin{cases}
(u(x_C)+\eps)\,\rfrac1s\dist\bigl(x,B(x_C,\diam(C))\bigr) &\text{if }x\in 
B[x_C,\diam(C)+s],\\
\infty &\text{otherwise}.
\end{cases}
\]
Note that $\psi_{C,s}\in W_1^1(B(x_C,\diam(C)+s))$.
Note also that $\diam(C)+s\le\delta$, and hence
$\psi_{C,s}(x)=u(x_C)+\eps>u(x)$ for all $x\in\partial 
B(x_C,\diam(C)+s)\cap\ol\Omega$, $C\in\cA$. 
These properties and Remark~\ref{rem-sobolev}(a) {\ndash} applied repeatedly 
{\ndash} imply that
\[
u_{\eps,s}:=u\land \inf_{C\in\+\cA}\psi_{C,s}\quad\text{on }\ol\Omega
\]
belongs to  $C(\ol\Omega)\cap W^1_1(\Omega)$. 
Since $\delta\le\eps$, the function $u_{\eps,s}$ coincides with $u$
on $\Omega_\eps:=\set{x\in\Omega}{B(x,\eps)\sse\Omega}$,
and as $u_{\eps,s}$ vanishes on 
$\partial\Omega$, Remark~\ref{rem-sobolev}(c) shows that 
$u_{\eps,s}\in W^1_{1,0}(\Omega)$. 

The following computations prepare the application of Sobolev's inequality 
\eqref{eq-sobolev} to $u_{\eps,s}$.
Remark~\ref{rem-sobolev}(a) implies
\begin{equation}\label{eq-mazya-main4}
\int_\Omega|\nabla u_{\eps,s}(x)|\di x \le
\int_\Omega|\nabla u(x)|\di x
+ \sum_{C\in\+\cA}\int_{B(x_C,\diam(C)+s)}|\nabla\psi_{C,s}(x)|\di x.
\end{equation}
Observing that $\psi_{C,s}=0$ on $B(x_C,\diam(C))$ and $|\nabla\psi_{C,s}|=
(u(x_C)+\eps)/s$ on the spherical shell $B(x_C,\diam(C)+s)\setminus 
B(x_C,\diam(C))$, we obtain
\begin{equation}\label{eq-mazya-main5}
\int_{B(x_C,\diam(C)+s)}|\nabla\psi_{C,s}(x)|\di x
=(u(x_C)+\eps)\drfrac1s\omega_n\bigl((\diam(C)+s)^n-\diam(C)^n\bigr).
\end{equation}
We note that, for $s\to0$, the latter expression tends to 
\[
(u(x_C)+\eps)\+n\omega_n\diam(C)^{n-1}=
2^{n-1}\frac{n\omega_n}{\omega_{n-1}}(u(x_C)+\eps)\+\omega_{n-1}\rd(C)^{n-1}.
\]

Recalling that $u\restrict_{\Omega_\eps}=u_{\eps,s}\restrict_{\Omega_\eps}$,  
we conclude from \eqref{eq-sobolev} that
\begin{equation}\label{eq-mazya-main6}
\|u\|_{L_q(\Omega_\eps)}\le\|u_{\eps,s}\|_{L_q(\Omega)}\le 
c(n)\int_\Omega|\nabla u_{\eps,s}(x)|\di x.
\end{equation}
Inserting \eqref{eq-mazya-main4} and \eqref{eq-mazya-main5} into
\eqref{eq-mazya-main6} and taking $s\to0$ we obtain 
\begin{equation}\label{eq-prelim-est}
\|u\|_{L_q(\Omega_\eps)}
\le 
c(n)\Bigl(\+\int_\Omega|\nabla u(x)|\di x
+2^{n-1}\frac{n\omega_n}{\omega_{n-1}}\sum_{C\in\+\cA}(u(x_C)+ 
\eps)\+\omega_{n-1}\rd(C)^{n-1}\Bigr).
\end{equation}
Exploiting~\eqref{eq-hm-estimate} we can estimate the sum on the
right-hand side of~\eqref{eq-prelim-est} by
\begin{align*}
\hspace{2em} & \hspace{-2em}
\sum_{C\in\+\cA}(u(x_C)+\eps)
\bigl(\bigl|\omega_{n-1}\rd(C)^{n-1}-\hm_{n-1}(C)\bigr|+\hm_{n-1}(C)\bigr)\\
&\le(\|u\|_\infty+\eps)\eps +\sum_{C\in\+\cA}\int_C(u(x_C)+\eps)\di\hm_{n-1}\\
&\le(\|u\|_\infty+\eps)\eps +\int_{\pd\Omega}(u+2\eps)\di\hm_{n-1}.
\end{align*}
Because of this inequality, the estimate \eqref{eq-prelim-est} implies
\begin{align*}
\|u\|_{L_q(\Omega_\eps)}
&\le c(n)\biggl(\+\int_\Omega|\nabla u(x)|\di x\\
&\phantom{\le c_(n)\biggl(}
+2^{n-1}\frac{n\omega_n}{\omega_{n-1}}
\Bigl(\+\int_{\partial\Omega}u\di\hm_{n-1}
+\eps(2\hm_{n-1}(\pd\Omega)+\|u\|_\infty+\eps)
\Bigr)\biggr).
\end{align*}
As this inequality holds for all $\eps>0$, we finally obtain
\begin{equation}\label{eq-mazya-main3}
\|u\|_{L_q(\Omega)} \le 
c(n)\Bigl(\+\int_\Omega|\nabla u(x)|\di x
+2^{n-1}\frac{n\omega_n}{\omega_{n-1}}\int_{\partial\Omega}u\di\hm_{n-1}
\Bigr).
\qedhere
\end{equation}
\end{proof}

\begin{remarks}\label{rem-rel-inequ}
(a) Here we show how \eqref{eq-mazya-2} can be derived from \eqref{eq-mazya}.
Using the continuity of the embedding $L_q(\Omega)\embed L_1(\Omega)$ one 
obtains
\begin{equation}\label{eq-mazya-1}
\|u\|_{L_1(\Omega)}\le 
c_1(\Omega)\Bigl(\int_\Omega|{\nabla u(x)}|\di x + 
\int_{\pd\Omega}|{u(x)}|\di\hm_{n-1}(x)\Bigr),
\end{equation}
for all $u\in C(\ol\Omega)\cap W_1^1(\Omega)$. We mention that this inequality 
can also be obtained directly in our proof given above: in the derivation of 
\eqref{eq-mazya-main6} one can use Poincar\'e's 
inequality instead of 
Sobolev's inequality, thereby obtaining an estimate for the $L_1$-norm of $u$.

Let $u\in C(\ol\Omega)\cap W^1_2(\Omega)$. By standard arguments, in particular 
using Remark~\ref{rem-sobolev}(b), we then conclude that 
$\nabla|u|^2=2|u|\,\nabla |u|$, $|u|^2\in W^1_1(\Omega)$. Hence, 
\eqref{eq-mazya-1} implies
\begin{equation}\label{eq1-cor-mazya}
\|u\|_{L_2(\Omega)}^2 \le 
c_1(\Omega)\Bigl(2\int_\Omega|u(x)|\bigl|\nabla|u|(x)\bigr|\di x
+\int_{\partial\Omega}|u|^2\di\hm_{n-1}\Bigr).
\end{equation}
Applying the Cauchy{\ndash}Schwarz inequality
and Young's inequality we get
\begin{align*}
\int_\Omega|u(x)|\bigl|\nabla|u|(x)\bigr|\di x 
&\le \|u\|_{L_2(\Omega)}\Bigl(\int_\Omega|\nabla u(x)|^2\di x\Bigr)^{1/2}\\
&\le \gamma\|u\|_{L_2(\Omega)}^2 +\frac1{4\gamma}\int_\Omega|\nabla u(x)|^2\di x
\end{align*}
for all $\gamma>0$. Inserting this inequality into \eqref{eq-mazya-1}, with 
$\gamma:=\frac1{4c_1(\Omega)}$, and reshuffling terms we finally obtain
\begin{equation*}
\|u\|_{L_2(\Omega)}^2\le 2\+c_1(\Omega)\Bigl(2\+c_1(\Omega)\int_\Omega|\nabla 
u(x)|^2\di x
+\int_{\partial\Omega}|u|^2\di\hm_{n-1}\Bigr).
\end{equation*}

(b)
Here we show how \eqref{eq-isoper} and \eqref{eq-sobolev}, with optimal 
constant, can be derived from~\eqref{eq-mazya}.

If $\Omega\sse\R^n$ is a bounded open set, then applying \eqref{eq-mazya} with 
$u:=\indic_\Omega$ one immediately obtains~\eqref{eq-isoper}. Now let 
$\Omega\sse\R^n$ be a bounded Borel set. We define the bounded open set 
$\Omega_0:=\kringel{\ol\Omega}$ (which may be empty). Then
\[
\pd\Omega_0=\ol{\Omega_0}\setminus\Omega_0\sse
\ol\Omega\setminus\kringel\Omega=\pd\Omega,
\]
and applying \eqref{eq-isoper} for $\Omega_0$ we conclude that
\begin{equation}\label{eq-isoper-Omega_0}
\vol_n(\Omega\cap\Omega_0)^{\frac{n-1}n}\le\vol_n(\Omega_0)^{\frac{n-1}n}
\le c(n)\hm_{n-1}(\pd\Omega_0)\le c(n)\hm_{n-1}(\pd\Omega).
\end{equation}
If $\vol_n(\Omega\setminus\Omega_0)=0$, then \eqref{eq-isoper-Omega_0} shows 
\eqref{eq-isoper} for $\Omega$.
Note that
$\Omega\setminus\Omega_0 \sse\ol\Omega\setminus\kringel\Omega=\pd\Omega$.
Hence, if $\vol_n(\Omega\setminus\Omega_0)>0$, then 
$\hm_n(\pd\Omega)=\vol_n(\pd\Omega)>0$ as 
well, which implies $\hm_{n-1}(\pd\Omega)=\infty$, and 
again~\eqref{eq-isoper} is satisfied for $\Omega$.

Concerning \eqref{eq-sobolev} we first note that \eqref{eq-mazya} clearly 
implies \eqref{eq-sobolev} for all $u\in\Cci(\R^n)$. Applying standard cut-off 
and smoothing procedures one concludes that \eqref{eq-sobolev} holds for all 
$u\in W_1^1(\R^n)$.

(c)
In this part of the remark we sketch a direct proof of \eqref{eq-isoper} for 
bounded open sets $\Omega$ with $C^2$-boundary, with optimal constant.
We first recall the Brunn-Minkowski inequality 
\begin{equation}\label{eq-brunn-mink}
 \la^n(A+B)^{1/n}\ge \la^n(A)^{1/n}+\la^n(B)^{1/n},
\end{equation}
valid for bounded Borel sets $A,B\sse\R^n$, where $\la^n$ denotes the Lebesgue 
measure. We refer to \cite[Theorem~C.7]{Leoni09} for an elementary proof 
of~\eqref{eq-brunn-mink}.
We also recall the Minkowski--Steiner formula 
\[
\hm_{n-1}(\partial\Omega)=\lim_{\eps\to0\rlim}
\rfrac1\eps\bigl(\la^n(\Omega+\eps B)-\la^n(\Omega)\bigr),
\]
where $B$ is the open unit ball in $\R^n$. 
Under our hypothesis of $C^2$-boundary the formula is not too hard to show; the 
essential 
observation for the proof is that, for small $\eps>0$, the set $(\Omega+\eps 
B)\setminus\Omega$ can be written as the disjoint union
\[
\bigcup_{t\in[0,\eps)}\set{x+t\nu(x)}{x\in\pd\Omega},
\]
where $\nu(x)$ denotes the outer unit normal at $x\in\pd\Omega$.

Combining these two ingredients one obtains
\begin{align*}
\hm_{n-1}(\partial\Omega)&=\lim_{\eps\to0\rlim}
\rfrac1\eps\bigl(\la^n(\Omega+\eps B)-\la^n(\Omega)\bigr)\\
&\ge \lim_{\eps\to0\rlim}\rfrac1\eps\bigl(\bigl(\la^n(\Omega)^{1/n}+
\eps\la^n(B)^{1/n}\bigr)^n-\la^n(\Omega)\bigr)\\
&=n\la^n(\Omega)^{(n-1)/n}\la^n(B)^{1/n}
=n\omega_n^{1/n}\la^n(\Omega)^{(n-1)/n},
\end{align*}
for bounded open sets $\Omega\sse\R^n$ with $C^2$-boundary.

On the basis of the above information, a variant of the isoperimetric 
inequality 
(see 
\eqref{eq-isoperimetric-perimeter} below) is proved in 
\cite[Section~9.1.5]{Mazya11}. Using \cite[Proposition~3.62]{AFP00} one then 
concludes \eqref{eq-isoper} for all bounded open $\Omega\sse\R^n$.

(d)
A proof of \eqref{eq-isoper} can also be found in 
\cite[3.2.43, note also 3.2.44]{Federer69}. A proof of 
\eqref{eq-mazya} is given 
in \cite[Example~5.6.2/1 and Theorem~5.6.3]{Mazya11}. 
Another proof of a more general version of
\eqref{eq-mazya} on the basis of \eqref{eq-isoper} and the coarea formula of 
geometric measure theory can be found in \cite{Rondi17}.
\end{remarks}

\section{Functions of bounded variation, the extended Sobolev inequality, and 
Maz'ya's inequality}
\label{sec-bdd-var}

Throughout this section let $\K=\R$.
Let $\Omega\sse\R^n$ be an open set.
A function $u\in L_1(\Omega)$ is of \textit{bounded variation}, 
$u\in\BV(\Omega)$, if the distributional derivatives $\pd_j u$, for 
$j=1,\dots,n$, are finite signed Borel measures. If $u\in\BV(\Omega)$, then the 
\textit{variation} of the $\R^n$-valued vector measure $\pd u$ is given by
\begin{align*}
|\pd u|(A):=\sup\set{\sum_{B\in\cB}|\pd u(B)|}{\cB\text{ finite partition of 
}A\text{ consisting of Borel sets}},
\end{align*}
for Borel subsets $A$ of $\Omega$. There exists a measurable function 
$\sigma\colon\Omega\to S_{n-1}$ such that $\pd 
u=\sigma|\pd u|$ (where $S_{n-1}$ 
denotes the unit sphere in $\R^n$); see \cite[Chap.\,5.1, Theorem 
1]{EvansGariepy92}. The \textit{total variation} of $\pd u$ is given by $|\pd 
u|(\Omega)$.
For more information 
on $\BV$ we refer to \cite[Chap.\,3]{AFP00}, \cite[Chap.\,5]{EvansGariepy92}, 
\cite[Chap.\,13]{Leoni09}.

With these notions we now sketch a proof of the \emph{extended Sobolev 
inequality}
\begin{equation}\label{eq-sobolev-extended}
\|u\|_{L_q(\R^n)}\le c(n)|\pd u|(\R^n)
\end{equation}
for all $u\in\BV(\R^n)$, with the optimal constant $c(n)$ (where $q=\frac 
n{n-1}$).

Recall from Remark~\ref{rem-rel-inequ}(c) the sketch of \eqref{eq-isoper} 
for bounded open $\Omega$ with $C^2$-boundary. Using the coarea formula and 
Sard's lemma one 
can then show \eqref{eq-sobolev} for $\Cci$-functions. 
As in the last paragraph of Remark~\ref{rem-rel-inequ}(b) one extends 
\eqref{eq-sobolev} to all functions $u\in W^1_1(\R^n)$. Finally, let 
$u\in\BV(\R^n)$, and choose a $\delta$-sequence 
$(\rho_k)$ in $\Cci(\R^n)_+$. Then $\rho_k*u\in W^1_1(\R^n)$ for all $k\in\N$, 
$\rho_k*u\to u$ in $L_1(\R^n)$ as $k\to\infty$,
and -- most importantly --  $|\pd(\rho_k*u)|(\R^n)\le |\pd u|(\R^n)$ 
for all $k\in\N$. This last inequality follows from
\[
 |\pd(\rho_k*u)| = |\rho_k*\pd u| \le \rho_k*|\pd u|
\]
and $\int\rho_k(x)\di x=1$.
Now, applying \eqref{eq-sobolev} to $\rho_k*u$ we conclude that
\[
\|\rho_k*u\|_{L_q(\R^n)}\le c(n)\int_{\R^n}|\nabla\rho_k*u(x)|\di x
\le c(n)|\pd u|(\R^n)\quad(k\in\N).
\]
From these inequalities and the convergence of the sequence $(\rho_k*u)$ to $u$ 
in $L_1(\R^n)$ one obtains~\eqref{eq-sobolev-extended}.

\begin{remark}
If $\Omega\sse\R^n$ is a Borel set, 
then one calls $\Omega$ of \textit{finite perimeter} if 
$\indic_\Omega\in\BV(\R^n)$, and 
$P(\Omega):=|\pd \indic_\Omega|(\R^n)$ is called the \textit{perimeter} 
of~$\Omega$. In this case \eqref{eq-sobolev-extended} reads
\begin{equation}\label{eq-isoperimetric-perimeter}
\vol_n(\Omega)^{\frac{n-1}n}=\|\indic_\Omega\|_{L_q(\R^n)}\le c(n)P(\Omega),
\end{equation}
an inequality that sometimes is also called ``isoperimetric inequality''; see 
\cite[Section 9.1.5]{Mazya11}.
\end{remark}

The final issue of this paper is to show that the method of our proof of 
\eqref{eq-mazya} also yields an estimate of the total variation of $\pd u$ by 
the 
right-hand side of \eqref{eq-mazya}, with a non-optimal constant. 

For this purpose we need the following important lower semi-continuity 
property of the 
total variation for $\BV$ functions. Let $(u_k)$ be a sequence in 
$\BV(\Omega)$, $u_k\to u$ in $L_1(\Omega)$, $\sup_{k\in\N}|\pd u_k|(\Omega) 
<\infty$. Then $u\in\BV(\Omega)$, and $|\pd 
u|(\Omega)\le\liminf_{k\to\infty}|\pd 
u_k|(\Omega)$. (See \cite[Chap.\,5.2, Theorem~1]{EvansGariepy92}.)

\begin{proposition}\label{prop-variation}
Let $\Omega$ and $u$ be as in \eqref{eq-mazya}, and 
extend $u$ to $\R^n$ by putting $u\restrict_{\R^n\setminus\Omega}:=0$. Then 
$u\in\BV(\R^n)$,
\begin{equation}\label{eq-inequ-variation}
|\pd u|(\R^n)\le \int_\Omega|\nabla u(x)|\di x + 
2^{n-1}\frac{n\omega_n}{\omega_{n-1}}\int_{\pd\Omega}|u|\di\hm_{n-1}.
\end{equation}
\end{proposition}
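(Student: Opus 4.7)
The plan is to re-use the family $u_{\eps,s}\in W^1_{1,0}(\Omega)$ constructed in the proof of \eqref{eq-mazya} and to replace the appeal to Sobolev's inequality by the lower semi-continuity of $|\pd\+\cdot\+|(\R^n)$ recalled just before the statement; no further analytical ingredient should be needed. As in Section~\ref{sec-Maz-ineq} I would first reduce to $u\ge 0$ via Remark~\ref{rem-sobolev}(b), and then, for $\eps>0$ and $s>0$ small, work with the zero-extension $\widetilde u_{\eps,s}$ of $u_{\eps,s}$ to $\R^n$. Since $u_{\eps,s}\in W^1_{1,0}(\Omega)$ (and in particular vanishes on $\pd\Omega$), $\widetilde u_{\eps,s}\in W^1_1(\R^n)\sse\BV(\R^n)$ with
\[
|\pd\widetilde u_{\eps,s}|(\R^n)=\int_\Omega|\nabla u_{\eps,s}|\di x.
\]
Combining \eqref{eq-mazya-main4}, \eqref{eq-mazya-main5} and the bound for $\sum_{C}(u(x_C)+\eps)\+\omega_{n-1}\rd(C)^{n-1}$ derived after \eqref{eq-prelim-est} in exactly the same way as there, one obtains
\[
\limsup_{s\to0}|\pd\widetilde u_{\eps,s}|(\R^n)\le\int_\Omega|\nabla u|\di x+2^{n-1}\frac{n\omega_n}{\omega_{n-1}}\int_{\pd\Omega}|u|\di\hm_{n-1}+C_u\+\eps,
\]
where $C_u$ depends only on $\|u\|_\infty$, $\hm_{n-1}(\pd\Omega)$ and $n$.

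To pass to a $\BV$-limit I would analyse the pointwise behaviour of $u_{\eps,s}$ as $s\to0$: for a.e.\ $x\in\Omega$ either $x\in B(x_C,\diam C)$ for some $C\in\cA$, in which case $\psi_{C,s}(x)=0$ forces $u_{\eps,s}(x)=0$, or $x$ lies outside every closed ball $B[x_C,\diam C]$, in which case $u_{\eps,s}(x)=u(x)$ for all sufficiently small $s>0$. Hence $\widetilde u_{\eps,s}$ converges boundedly (dominated by $\|u\|_\infty\indic_\Omega$) and a.e.\ to a function $u_\eps$ that agrees with $u$ off the set $N_\eps:=\bigcup_C B(x_C,\diam C)$ and vanishes on~$N_\eps$, and dominated convergence gives $\widetilde u_{\eps,s}\to u_\eps$ in $L_1(\R^n)$. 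Since $N_\eps$ is contained in the $\delta$-neighbourhood of $\pd\Omega$ with $\delta\le\eps$, and $\vol_n(N_\eps)\le\frac{2^n\omega_n\+\delta}{4\omega_{n-1}}\bigl(\hm_{n-1}(\pd\Omega)+\eps\bigr)\to0$, one also has $u_\eps\to u$ in $L_1(\R^n)$ as $\eps\to0$. A diagonal choice $(\eps_k,s_k)$ produces $v_k:=\widetilde u_{\eps_k,s_k}\to u$ in $L_1(\R^n)$ with
\[
\limsup_{k\to\infty}|\pd v_k|(\R^n)\le\int_\Omega|\nabla u|\di x+2^{n-1}\frac{n\omega_n}{\omega_{n-1}}\int_{\pd\Omega}|u|\di\hm_{n-1},
\]
and the lower semi-continuity of $|\pd\+\cdot\+|(\R^n)$ stated before the proposition then delivers $u\in\BV(\R^n)$ together with~\eqref{eq-inequ-variation}.

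The quantitative heart of the argument is already contained in the proof of \eqref{eq-mazya}, so the main obstacle I expect is purely bookkeeping of the double limit $s\to0$, $\eps\to0$, together with the (straightforward but necessary) verification that $u_{\eps,s}=0$ on $\pd\Omega$, so that its zero-extension really lies in $W^1_1(\R^n)$ and is legitimately handled by the semi-continuity theorem.
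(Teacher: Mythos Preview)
Your strategy is the paper's strategy: reuse the approximants $u_{\eps,s}\in W^1_{1,0}(\Omega)$, extend them by zero to obtain functions in $W^1_1(\R^n)\sse\BV(\R^n)$, transport the gradient estimates \eqref{eq-mazya-main4}--\eqref{eq-mazya-main5}, and finish by lower semi-continuity of the total variation. Two points deserve comment.

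\textbf{The reduction to $u\ge0$.} Invoking Remark~\ref{rem-sobolev}(b) as in Section~\ref{sec-Maz-ineq} means replacing $u$ by $|u|$. That worked there because the left-hand side $\|u\|_{L_q}$ depends only on $|u|$. Here the left-hand side is $|\pd u|(\R^n)$, and to pass from an inequality for $|u|$ to one for $u$ you would need $|\pd u|(\R^n)\le|\pd\,|u|\,|(\R^n)$, which is neither obvious a~priori (you do not yet know $u\in\BV(\R^n)$) nor true for general $\BV$ functions. The paper instead proves the non-negative case for $u^+$ and $u^-$ separately and then uses $|\pd u|\le|\pd u^+|+|\pd u^-|$, together with $\int_\Omega|\nabla u^+|+\int_\Omega|\nabla u^-|=\int_\Omega|\nabla u|$ and $u^++u^-=|u|$ on~$\pd\Omega$. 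This is a small but genuine fix.

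\textbf{The double limit.} Your analysis of the pointwise limit $u_\eps=\lim_{s\to0}u_{\eps,s}$ and the subsequent diagonal extraction is correct, but unnecessary. The paper observes that $u_{\eps,s}=u$ on $\Omega_\eps$ and $0\le u_{\eps,s}\le u$ on $\Omega$ (since each $\psi_{C,s}\ge0$), so
\[
\|u_{\eps,s}-u\|_{L_1(\R^n)}\le\int_{\Omega\setminus\Omega_\eps}u\di x
\le\|u\|_\infty\vol_n(\Omega\setminus\Omega_\eps)\to0
\]
uniformly in $s$. Hence one may simply pick, for each $\eps>0$, a single $s_\eps$ small enough that the right-hand side of \eqref{eq-mazya-main4}--\eqref{eq-mazya-main5} is within $\eps$ of its $s\to0$ limit, and apply lower semi-continuity directly to the family $(u_{\eps,s_\eps})_{\eps>0}$. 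No intermediate function $u_\eps$, no volume bound for $N_\eps$, no diagonal sequence.
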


\begin{proof}
First we treat the case $u\ge0$.
For $\eps>0$ we start the construction as in the proof of Maz'ya's inequality 
above and obtain $\delta_\eps\in(0,\eps]$ and functions $u_{\eps,s}\in 
W^1_{1,0}(\Omega)$ satisfying \eqref{eq-mazya-main4} for all 
$s\in(0,\delta_\eps/2)$. We also extend $u_{\eps,s}$ to $\R^n$ by zero on 
the complement of $\Omega$ and thereby obtain $u_{\eps,s}\in W^1_1(\R^n)$.
Exploiting the considerations following \eqref{eq-mazya-main4} -- 
but staying with the left hand side of \eqref{eq-mazya-main4} instead of 
applying~\eqref{eq-sobolev} -- we find $s_\eps\in(0,\delta_\eps/2)$ such that
\begin{align*}
\int_{\R^n}|\nabla u_{\eps,s_\eps}(x)&|\di x
\le \int_\Omega|\nabla u(x)|\di x\\
& +2^{n-1}\frac{n\omega_n}{\omega_{n-1}}
\Bigl(\+\int_{\partial\Omega}u\di\hm_{n-1}
+\eps(2\hm_{n-1}(\pd\Omega)+\|u\|_\infty+\eps)
\Bigr)+\eps.
\end{align*}
Clearly $u_{\eps,s_\eps}\to u$ in $L_1(\R^n)$ as $\eps\to0$. Applying the lower 
semi-continuity of the total variation mentioned above we conclude 
\eqref{eq-inequ-variation}.

For general $u\in C(\ol{\Omega})\cap W^1_1(\Omega)$ one then obtains  
\eqref{eq-inequ-variation} by applying $|\pd u|(\R^n) \le |\pd 
u^+|(\R^n) + |\pd u^-|(\R^n)$.
\end{proof}

\begin{remarks}
(a) Combining the inequalities \eqref{eq-sobolev-extended} and 
\eqref{eq-inequ-variation} one again obtains~\eqref{eq-mazya-main3}.

(b) One might hope that, once one has \eqref{eq-inequ-variation} and thereby 
knows that the function $u$ defined in Proposition~\ref{prop-variation} 
belongs to $\BV$,
one can continue and estimate the total variation of $\pd u$ by 
$\int_\Omega|\nabla u(x)|\di x + \int_{\pd\Omega}|u|\di\hm_{n-1}$. Combining 
\eqref{eq-sobolev-extended} with this estimate one would
obtain Maz'ya's inequality \eqref{eq-mazya} with the optimal constant. 
This kind of procedure is successful for the isoperimetric inequality, 
where first it is shown that finite Hausdorff measure of the boundary implies 
finite perimeter, and then fine points of geometric measure theory are applied 
to show that in fact the perimeter is estimated by the Hausdorff measure of the 
boundary. 
\end{remarks}

{\frenchspacing

}
\newpage

\noindent
Hendrik Vogt\\
Fachbereich Mathematik\\
Universit\"at Bremen\\
Postfach 330 440\\
28359 Bremen, Germany\\
{\tt 
hendrik.vo\rlap{\textcolor{white}{hugo@egon}}gt@uni-\rlap{\textcolor{white}{%
hannover}}bremen.de}\\[3ex]
J\"urgen Voigt\\
Technische Universit\"at Dresden\\
Fakult\"at Mathematik\\
01062 Dresden, Germany\\
{\tt 
juer\rlap{\textcolor{white}{xxxxx}}gen.vo\rlap{\textcolor{white}{yyyyyyyyyy}}%
igt@tu-dr\rlap{\textcolor{white}{%
zzzzzzzzz}}esden.de}

\end{document}